\newcommand\Tr{{\rm Tr\,}}
\newcommand{\be}[1]{\begin{equation}\label{#1}}
\newcommand{\ee}{\end{equation}}
\newtheorem{theorem}{Theorem}[section]
\newtheorem{corollary}{Corollary}
\newtheorem{remark}{Remark}
\numberwithin{equation}{section}
\title
[Magnetic Lieb--Thirring inequalities on the torus]
 {Magnetic Lieb--Thirring inequalities on the torus }
\author[A.Ilyin, A.Laptev] {Alexei  Ilyin,
 Ari Laptev}
\subjclass{26D10, 35P15, 46E35}
\keywords{Magnetic Schr\"odinger operator,  Lieb--Thirring inequalities,  interpolation inequalities}
\address
{\noindent\newline  Keldysh Institute of Applied Mathematics;
\newline
Imperial College London}
\email{ilyin@keldysh.ru; a.laptev@imperial.ac.uk}
\begin{document}

\maketitle

\medskip

\bigskip
\begin{quote}
{\normalfont\fontsize{8}{10}
\selectfont{\bfseries  Abstract.}
In this paper we prove Lieb--Thirring inequalities
for magnetic Schr\"odinger operators on the torus, where the constants in the inequalities depend on the magnetic flux.
}
\end{quote}

\setcounter{equation}{0}
\section{Introduction}\label{Sec:1}

Lieb--Thirring inequalities have important
applications in mathematical physics, analysis, dynamical systems,
attractors, to mention a few. A current state of the art  of many aspects
of the theory is presented in~\cite{lthbook}.

In certain applications Lieb--Thirring inequalities are  considered
on a compact manifold (e.\,g.,  torus, sphere~\cite{ILZ-JFA}). In this case one has to
impose the zero mean orthogonality condition. However, in the case of a torus
the corresponding constants in the Lieb--Thirring inequalities depend
on the aspect ratios of the periods, for example, on the 2D torus the
rate of growth of the constants  is proportional to the aspect ratio.

On the other hand, on the torus $\mathbb{T}^d$ with arbitrary periods
it is possible to obtain bounds for the Lieb--Thirring constants that
are independent of the ratios of the periods, provided that we impose
a stronger orthogonality condition that the functions must have zero average
over the shortest period uniformly with respect to the remaining variables~\cite{I-L-MS}.

In this work we prove Lieb--Thirring inequalities on the torus  for the magnetic
Laplacian.
The introduction of the magnetic potential not only  removes the orthogonality condition but makes it possible
to obtain bounds for the constants  that are independent of the periods of the torus
(more precisely, depend only on the corresponding magnetic fluxes).

In this paper, when obtaining the constant in the Lieb--Thirring inequality
 we use a combination of the result obtained in \cite{IMRN} and also adopting
 the proof from \cite{lthbook} to the case of the magnetic operator
 on the torus. Surprisingly both such independent estimates
 play important  and non-interchangeable roles depending on the magnetic fluxes.

In conclusion of this brief introduction we point out that magnetic interpolation
inequalities both in $\mathbb{R}^d$, and in the periodic case received much attention over the last
years, see \cite{DELL1,DELL2,Nazarov} and the references therein.

We now describe our main result.
Let $\mathbb{T}^d=\mathbb{T}^d(L)$  be the $d$-dimensional torus with periods
$L_1,\dots,L_d$.
Let us consider the eigenvalue problem for the magnetic
 Schr\"odinger
operator $\mathcal{H}$ in $L_2(\mathbb T^{d})$:
\begin{equation}\label{onTd}
\aligned
&\mathcal{H}\Psi=
\left(i\,\nabla_x - \mathbf A(x)\right)^2 \Psi-V(x)\Psi\\=
&\sum_{j=1}^d(i\partial_{x_j}-a_j(x_j))^2\Psi-V(x)\Psi=-\lambda\Psi,
\endaligned
\end{equation}
where
$$\mathbf A(x) = \left(a_1(x_1),\dots a_{d} (x_{d})\right)$$
is the real-valued  magnetic vector potential in the ``diagonal'' case when $a_j(x)=a_j(x_j)$.
For each $j$ we define the magnetic flux
$$
\alpha_j = \frac{1}{2\pi}\, \int_0^{L_j} a_j(x_j) \, dx_j  \qquad 1\le j \le d,
$$
and assume that $\alpha_j\not\in\mathbb Z$ for all $j$. Then we have the following result.

\begin{theorem}\label{Th:Tdspec} Suppose that the potential
$V(x)\ge0$  and $V\in L_{\gamma + d/2}(\mathbb T^{d})$.
Let $\gamma \ge1$. Then the following bound holds
for the $\gamma$-moments of the negative eigenvalues
of operator~\eqref{onTd}:
\begin{equation}\label{gamma-moments}
\sum_{n}\lambda_n^{\gamma}
\le L_{\gamma, d} \, \int_{\mathbb T^{d}}
 V^{\gamma + d/2} (x)\,dx ,
\end{equation}
where
\begin{equation}\label{gamma-moments-const}
L_{\gamma, d} \le \left(\frac{\pi}{\sqrt3}\right)^{d}
L_{\gamma,d}^{\mathrm{cl}} \prod_{j=1}^{d} \sqrt{\mathrm K(\alpha_j)}.
\end{equation}
Here $L_{\gamma,d}^{\mathrm{cl}}$ is the semiclassical constant~\eqref{class},
and
 $$
\mathrm{K}(\alpha)\le\min(\mathrm{K}_1(\alpha),\mathrm{K}_2(\alpha)).
$$
The expressions for $\mathrm{K}_1(\alpha)$ and $\mathrm{K}_2(\alpha)$ are as follows:
\begin{align}\label{K1}
&\mathrm{K}_1(\alpha)=\mathrm k(\alpha)^2,\quad \mathrm k(\alpha)=\left\{
            \begin{array}{ll}
             \frac1{|\sin(2\pi\alpha)|}, & \hbox{$0<\alpha\,\mathrm{mod}(1)<1/4$;} \\
              1, & \hbox{$1/4\le\alpha\,\mathrm{mod}(1)\le3/4$;} \\
             \frac1{|\sin(2\pi\alpha)|}, & \hbox{$3/4<\alpha\,\mathrm{mod}(1)<1$,}
            \end{array}
          \right.
\\
\label{K2}
&\mathrm K_2(\alpha)=\frac5{3\sqrt{3}\pi}
\cdot\left[\sup_{b\ge
0}b^{5/3}\sum_{k\in\mathbb{Z}} \frac1{(|k+\alpha|^3+b)^2}\right]^2\,.
\end{align}
\end{theorem}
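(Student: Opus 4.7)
The strategy exploits the diagonal (separable) structure of $\mathbf{A}$: the kinetic part $\mathcal{H}_0=\sum_{j=1}^{d}(i\partial_{x_j}-a_j(x_j))^2$ is a sum of commuting one-dimensional operators $H_j$ on $L_2(\mathbb T^1(L_j))$. First I would, separately in each variable, apply the gauge transformation $\psi\mapsto e^{-i\int_{0}^{x_j}a_j(s)\,ds}\psi$; this conjugates $H_j$ to the flat Laplacian $-\partial_{x_j}^2$ on $[0,L_j]$ with the twisted boundary condition $\psi(L_j)=e^{2\pi i\alpha_j}\psi(0)$, whose spectrum is the explicit set $\{(2\pi(k+\alpha_j)/L_j)^2:k\in\mathbb Z\}$. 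The hypothesis $\alpha_j\notin\mathbb Z$ keeps $0$ out of this spectrum, which is exactly what removes the need for any zero-mean orthogonality condition.

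The core of the argument is a one-dimensional Lieb--Thirring bound for $\gamma=1$, of the form
\begin{equation*}
\sum_n \mu_n(V)\;\le\;\frac{\pi}{\sqrt 3}\,\sqrt{\mathrm K(\alpha)}\,L_{1,1}^{\mathrm{cl}}\!\int_0^{L}V^{3/2}\,dx,
\end{equation*}
which I would establish by two independent routes whose minimum gives $\mathrm K(\alpha)$. Route (a), following \cite{IMRN}: apply the Birman--Schwinger principle using the explicit Green's function of the twisted circle Laplacian; this kernel is expressible through sines, so that bounding its operator norm on $L^\infty$ produces the factor $\mathrm k(\alpha)=1/|\sin(2\pi\alpha)|$ for $\alpha$ outside the flat middle $[1/4,3/4]$, whence $\mathrm K_1(\alpha)=\mathrm k(\alpha)^2$. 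Route (b), adapting \cite{lthbook}: use the trace $\mathrm{Tr}\,e^{-tH_0}=(L/2\pi)\sum_{k\in\mathbb Z}e^{-t(2\pi(k+\alpha)/L)^2}$ inside a heat-kernel/Laplace-transform estimate on the Birman--Schwinger operator, then optimise in the auxiliary parameter $b\ge 0$; the resulting combinatorial sum is precisely the one appearing in \eqref{K2}. The Eden--Foias-type prefactor $\pi/\sqrt 3$ emerges from a Schwarz inequality applied to the momentum sum.

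With the 1D statement in hand I would lift to dimension $d$ via the Laptev--Weidl induction with operator-valued potentials: because the commuting $H_j$'s make $\mathcal H_0$ block-diagonal after gauge, one applies the 1D inequality in variable $x_d$ to the fibre-wise operator-valued potential $(\sum_{j<d}H_j-V(\cdot,x_d))_-$ and iterates over the remaining variables. The Aizenman--Lieb monotonicity principle then extends the bound from $\gamma=1$ to all $\gamma\ge 1$ without deterioration of the ratio $L_{\gamma,d}/L_{\gamma,d}^{\mathrm{cl}}$, delivering the announced constant $(\pi/\sqrt 3)^{d}L_{\gamma,d}^{\mathrm{cl}}\prod_j\sqrt{\mathrm K(\alpha_j)}$.

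The main obstacle is the flux-dependent 1D estimate itself. Neither route alone suffices: $\mathrm K_1$ blows up as $\alpha$ approaches an integer, while $\mathrm K_2$ (a finite supremum in $b$) is efficient only in a neighbourhood of the integers; verifying that the pointwise minimum $\min(\mathrm K_1,\mathrm K_2)$ is finite and $L$-independent on $\mathbb R\setminus\mathbb Z$, and that the two bounds complement each other cleanly, is the combinatorial heart of the proof. A secondary technical point is checking that the 1D inequality admits an operator-valued version with exactly the same constant, so that the Laptev--Weidl lifting preserves the factor $\sqrt{\mathrm K(\alpha_j)}$ in each direction without paying anything beyond $\pi/\sqrt 3$ per dimension.
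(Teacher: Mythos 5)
Your overall architecture is the same as the paper's: a one-dimensional magnetic Lieb--Thirring bound with constant $\min(\mathrm K_1,\mathrm K_2)$, an operator-valued version of it, the Laptev--Weidl lifting in the diagonal directions, and Aizenman--Lieb for $\gamma\ge1$. However, the crucial 1D step is misdescribed in ways that matter. The paper does not obtain $\mathrm K_1$ from an $L^\infty$ bound on a Birman--Schwinger kernel: it uses the sharp magnetic interpolation inequality $\|u\|_\infty^2\le \mathrm k(\alpha)\|(i\,d/dx-a)u\|\,\|u\|$ (constant computed from the diagonal Green's function $\sinh\varphi/(\cosh\varphi-\cos 2\pi\alpha)$) combined with the Eden--Foias argument for orthonormal systems, and the factor $\pi/\sqrt3$ then comes from the Legendre-type optimization converting the orthonormal-system inequality into the negative-trace bound, not from ``a Schwarz inequality applied to the momentum sum.'' More seriously, your route (b) -- a heat-kernel/Laplace-transform estimate on the Birman--Schwinger operator -- is not the mechanism that produces $\mathrm K_2$, and as described it would not: the specific quantity $\sup_{b\ge0}b^{5/3}\sum_k(|k+\alpha|^3+b)^{-2}$ with the prefactor $5/(3\sqrt3\pi)$ arises from the Rumin-type splitting $\psi=\psi^E+(\psi-\psi^E)$ with the optimal filter $f(t)=1/(1+\mu t^{3/2})$ of Frank--Hundertmark--Jex--Nam, Bessel's inequality for the high-frequency remainder ($\|\chi^E(\cdot,x)\|^2\le A(\alpha)\sqrt E$), and integration in $E$; a heat-trace argument yields sums of Gaussians and a different constant structure. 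Relatedly, what you call a ``secondary technical point'' -- that the 1D inequality holds with the same constant for operator-valued potentials -- is in fact the paper's main new technical work for $\mathrm K_2$ (Theorem~\ref{Th:matrix_orth}): one must rerun the Rumin argument on the matrix $U(x)=\sum_n\bm\psi_n\overline{\bm\psi}_n^T$, pass through its eigenvalues via the variational principle, and only then trace and integrate.

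There is also an ordering gap in the lifting. The Laptev--Weidl induction applied to the $\gamma$-moment consumes 1D bounds at the exponents $\gamma,\gamma+\tfrac12,\dots,\gamma+\tfrac{d-1}2$ (each step raises the Riesz order by $1/2$), so you cannot lift ``the $\gamma=1$ statement'' alone and invoke Aizenman--Lieb only afterwards in dimension $d$: the monotonicity argument must be applied to the 1D matrix inequality first (as in Corollary~\ref{1D gamma-moments}) to supply the half-integer-shifted moments the induction needs, and then the lifting is performed for general $\gamma\ge1$. Your gauge-transformation remark is fine and is equivalent to the paper's use of the explicit orthonormal family $\varphi_n$ in~\eqref{nonconst}; likewise your observation that neither $\mathrm K_1$ nor $\mathrm K_2$ suffices alone agrees with the paper's Section~\ref{Sec:5}, but no delicate ``combinatorial'' matching is needed there -- one simply takes the pointwise minimum, which is finite for $\alpha\notin\mathbb Z$ because $\mathrm K_1$ already is.
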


In Sections~\ref{Sec:2} and \ref{Sec:3} we consider the one dimensional case, where
 this theorem is proved in the equivalent dual formulation
in terms of orthonormal systems in the scalar case and the matrix case, respectively.
We point out that this theorem with $\mathrm K(\alpha)\le\mathrm K_1(\alpha)$
was proved in \cite{IMRN} and the proof was based on the magnetic interpolation inequality~\eqref{magnetic} (whose proof is briefly recalled in Section~\ref{Sec:2}).
In the 1D scalar case  this inequality immediately gives the result by the method
of \cite{E-F}, while the inequality in the essential matrix case was
proved in \cite{IMRN} (see also~\cite{D-L-L} for the starting point of this approach).

The bound for the constant $\mathrm K(\alpha)\le\mathrm K_2(\alpha)$
was proved in the 1D scalar case in~\cite{UMN}. The proof
in the matrix case is given in Theorem~\ref{Th:matrix_orth}. Then the
inequalities for orthonormal systems are equivalently
reformulated in Theorem~\ref{Th:matrix_neg} in terms of estimates
for the negative trace  and for higher-order Riesz means of negative eigenvalues in
Corollary~\ref{1D gamma-moments}. Finally, Theorem~\ref{Th:Tdspec} is proved in
Section~\ref{Sec:4} by using the lifting argument with respect to dimensions~\cite{Lap-Weid}.
 The fact that the magnetic potential is of the special diagonal form
is crucial here.

We see in \eqref{K1} and \eqref{K2} that  unlike $\mathrm
K_1(\alpha)$, the constant $\mathrm K_2(\alpha)$ is not given in
the explicit form. A computation in Section~\ref{Sec:5} shows that
in the central region $|\alpha-1/2|<0.2273$ it holds that $\mathrm
K_2(\alpha)<\mathrm K_1(\alpha)$, while near the end-points
$\mathrm K_1(\alpha)$ is better, see Fig.~\ref{Fig.2}.

\setcounter{equation}{0}
\section{1D periodic case}\label{Sec:2}

We consider here the magnetic Lieb--Thirring inequality
in the 1D periodic case. We
assume that the period equals
$$
L=\frac{2\pi}\varepsilon,\quad\varepsilon>0.
$$
Of course, one can use scaling and consider only the case $\varepsilon=1$,
but we prefer to consider the general case in order to trace down
the corresponding constants in the most explicit way.
\begin{theorem}\label{Th:magnetic1d}
 Let the family of functions $\psi_1, \dots, \psi_N \in H^1([0,L]_\mathrm{per})$
 be orthonormal in $L^2([0,L]_\mathrm{per})$. Then
\begin{equation}\label{2}
\int_0^{L} \rho(x)^3\, dx \le {\mathrm K}(\alpha) \sum_{n=1}^N \int_{0}^{L} |i \psi'_n(x) - a(x) \psi_n(x)|^2 \, dx,
\end{equation}
where
$$
 \rho(x) = \sum_{n=1}^N |\psi_n(x)|^2
$$
 and
 $$
\mathrm{K}(\alpha)\le\min(\mathrm{K}_1(\alpha),\mathrm{K}_2(\alpha)).
$$
Here
$\alpha$ is the magnetic flux
\begin{equation}\label{flux}
\alpha:=\frac1{2\pi}\int_0^La(x)\,dx,
\end{equation}
and the constants $\mathrm{K}_1(\alpha)$ and $\mathrm{K}_2(\alpha)$ are defined in
\eqref{K1}, \eqref{K2}.
\end{theorem}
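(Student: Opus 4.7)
The plan is to establish the two bounds $\mathrm K(\alpha)\le \mathrm K_1(\alpha)$ and $\mathrm K(\alpha)\le \mathrm K_2(\alpha)$ by independent arguments and take their minimum; they arise from genuinely different methods.

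\emph{Route to $\mathrm K_1(\alpha)=\mathrm k(\alpha)^2$.} The key intermediate ingredient is the pointwise magnetic Sobolev-type inequality
\[
|u(x)|^2 \le \mathrm k(\alpha)\,\|(i\partial_x-a)u\|_{L^2[0,L]}\,\|u\|_{L^2[0,L]},\qquad u\in H^1([0,L]_\mathrm{per}).
\]
To derive it I would first apply the gauge transformation $u(x)\mapsto e^{i\int_0^x a(s)\,ds}u(x)$, which is possible in one dimension and replaces the magnetic operator by $-\partial_x^2$ acting on functions with the twisted boundary condition $u(L)=e^{2\pi i\alpha}u(0)$. The eigenfunctions are plane waves with eigenvalues $\varepsilon^2(n+\alpha)^2$, $n\in\mathbb Z$, and the diagonal $G_\mu(x,x)$ of the resolvent of $(i\partial-a)^2+\mu$ becomes a translation-invariant Fourier sum summable in closed form via a cotangent-type identity. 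Expanding $u$ in the eigenbasis and applying Cauchy--Schwarz gives the standard pointwise bound $|u(x)|^2\le G_\mu(x,x)\bigl(\|(i\partial-a)u\|^2+\mu\|u\|^2\bigr)$, and optimization in $\mu$ reduces the problem to computing $\sup_{\mu>0}\sqrt\mu\,G_\mu(x,x)$. A short calculus exercise on the closed form yields: for $\alpha\bmod 1\in[1/4,3/4]$ the supremum is attained as $\mu\to\infty$ and equals $1/2$, while for $\alpha\bmod 1\in(0,1/4)\cup(3/4,1)$ there is an interior critical point with value $1/(2|\sin(2\pi\alpha)|)$; together these produce $\mathrm k(\alpha)$ exactly as in \eqref{K1}.

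The second step is the Eden--Foias duality \cite{E-F}. For each fixed $x$, apply the Sobolev inequality to $f(y)=\sum_n\overline{\psi_n(x)}\psi_n(y)/\sqrt{\rho(x)}$, which is unit-norm by orthonormality and satisfies $|f(x)|^2=\rho(x)$. Squaring and multiplying by $\rho(x)$ yields
\[
\rho(x)^3 \le \mathrm k(\alpha)^2 \sum_{m,n}\psi_m(x)\overline{\psi_n(x)}\,\langle (i\partial-a)\psi_m,(i\partial-a)\psi_n\rangle.
\]
Integrating in $x$ and using $\int\psi_m\overline{\psi_n}\,dx=\delta_{mn}$ collapses the double sum to $\sum_n\|(i\partial-a)\psi_n\|^2$, giving \eqref{2} with constant $\mathrm k(\alpha)^2=\mathrm K_1(\alpha)$.

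\emph{Route to $\mathrm K_2(\alpha)$.} Here I follow \cite{UMN}. By Legendre duality the orthonormal inequality \eqref{2} is equivalent (up to an explicit numerical factor) to the Lieb--Thirring bound
\[
\Tr\bigl((i\partial-a)^2-V\bigr)_- \le c(\alpha)\int_0^L V^{3/2}\,dx,\qquad V\ge 0.
\]
I would combine the layer-cake identity $\Tr(\cdot)_-=\int_0^\infty N(-\mu)\,d\mu$ with the Birman--Schwinger principle, controlling the counting function $N(-\mu)$ by the trace of a suitable power of the compact operator $V^{1/2}((i\partial-a)^2+\mu)^{-1}V^{1/2}$. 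This trace is computable via the explicit spectrum $\{\varepsilon^2(n+\alpha)^2\}_{n\in\mathbb Z}$; after scaling and optimizing the spectral parameter through a dimensionless variable $b$, the supremum in \eqref{K2} appears naturally and identifies $\mathrm K_2(\alpha)$.

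\emph{Main obstacle.} In the scalar 1D setting both bounds are repackagings of \cite{IMRN} and \cite{UMN}, so the principal task is coherent book-keeping of the constants. The only delicate analytic step is the maximization of $\sqrt\mu\,G_\mu(x,x)$: the change of regime at $\alpha\bmod 1=1/4$ and $3/4$ corresponds precisely to whether the derivative of the closed-form expression admits an interior zero, producing the piecewise $\mathrm k(\alpha)$. The deeper difficulty, which must be postponed to Section~\ref{Sec:3}, is extending the $\mathrm K_2$ bound to matrix-valued orthonormal systems so that the higher-dimensional lifting of \cite{Lap-Weid} can be applied; the Eden--Foias derivation of $\mathrm K_1$ does not extend cleanly to that setting.
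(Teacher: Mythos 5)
Your first route, to $\mathrm K_1(\alpha)=\mathrm k(\alpha)^2$, is essentially the paper's own argument: the gauge transformation to twisted boundary conditions is equivalent to the paper's use of the modified orthonormal system $e^{i((n+\alpha)\varepsilon x-\int_0^x a)}$, the optimization of $\sqrt{\mu}\,G_\mu(x,x)$ reproduces the interpolation inequality \eqref{magnetic} with the piecewise constant \eqref{K1}, and your choice $f(y)=\rho(x)^{-1/2}\sum_n\overline{\psi_n(x)}\psi_n(y)$ is exactly the Eden--Foias substitution used in the paper. That half is correct.

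The route to $\mathrm K_2(\alpha)$ has a genuine gap. You reduce \eqref{2} to the negative-trace bound by Legendre duality (which is fine and constant-preserving for the power $3$), but then you only assert that a Birman--Schwinger/layer-cake computation with traces of powers of $V^{1/2}((i\partial-a)^2+\mu)^{-1}V^{1/2}$ will ``naturally'' produce the constant \eqref{K2}; no such computation is carried out, and it is implausible that it would give this constant. Traces of powers of the Birman--Schwinger operator involve the resolvent weights $(\varepsilon^2(k+\alpha)^2+\mu)^{-m}$, which are quadratic in $k$; following that route with the diagonal Green's function bound $G_\mu(x,x)\le \mathrm k(\alpha)/(2\sqrt\mu)$ you would at best recover a constant of the type $\mathrm K_1$, not the expression $\sup_{b\ge0}b^{5/3}\sum_k(|k+\alpha|^3+b)^{-2}$. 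The cubic weight and the square in \eqref{K2} come from a different mechanism, which is what the paper (and \cite{UMN}, which is not a Birman--Schwinger argument) actually uses: the Rumin-type decomposition of the kinetic energy, $\sum_n\|(i\partial-a)\psi_n\|^2=\int_0^\infty\sum_n\|\psi_n^E\|^2\,dE$ with the low-energy filter $f$, the Bessel-inequality bound \eqref{sumchi} for the remainder $\sum_n|\psi_n(x)-\psi_n^E(x)|^2\le\|\chi^E(\cdot,x)\|^2\le A(\alpha)\sqrt E$ as in \eqref{A(alpha)}, the optimized choice $f(t)=1/(1+\mu t^{3/2})$ of \cite{Frank-Nam} in \eqref{choff}, and finally the pointwise inequality $\sum_n|\psi_n^E(x)|^2\ge(\sqrt{\rho(x)}-\sqrt{A(\alpha)}E^{1/4})_+^2$ integrated in $E$, which yields $\mathrm K_2(\alpha)=15A(\alpha)^2$. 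Without this (or an equivalent) argument your proposal does not establish the bound $\mathrm K(\alpha)\le\mathrm K_2(\alpha)$; note also that this part of the proof is applied directly to the orthonormal family, so the duality detour is unnecessary once the correct mechanism is in place.
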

\begin{proof}
We first
point out that  estimate~\eqref{K1} was obtained in \cite[(6.8)]{IMRN},
where $\mathrm k(\alpha)$ is the constant in the
1D magnetic interpolation inequality
\begin{equation}\label{magnetic}
\|u\|_\infty^2\le \mathrm k(\alpha)\left(\int_0^{L}
\left|i\,u'(x)-a(x) u(x)\right|^2dx\right)^{1/2}
\left(\int_0^{L}
|u(x)|^2dx\right)^{1/2}.
\end{equation}
The sharp constant $\mathrm k(\alpha)$ (shown in Figure~\ref{Fig.2})
was found in \cite[(3.5)]{IMRN} and is given in~\eqref{K1}.
For the sake of completeness we briefly  recall the proof of \eqref{magnetic}.
We further assume for the moment that
the magnetic potential is constant $a(x)\equiv a=\mathrm{const}$.
We use the Fourier series
$$
\psi(x)=\sqrt{\frac\varepsilon{2\pi}}\sum_{k\in\mathbb{Z}}\widehat\psi_k e^{ik\varepsilon x},\qquad
\widehat{\psi}_k=\sqrt{\frac\varepsilon{2\pi}}
\int_{0}^{2\pi/\varepsilon}\psi(x)e^{-ik\varepsilon x}dx.
$$
We consider  the self-adjoint operator
$$
A(\lambda):=\left(i\frac d{dx}-a\right)^2+\lambda I
$$
and its Green's function $G_\lambda(x,\xi)$
$$
A(\lambda)G_\lambda(x,\xi)=\delta(x-\xi),
$$
which is found in terms of the Fourier series
\begin{equation}\label{Green}
G_\lambda(x,\xi)=\frac\varepsilon{2\pi}\sum_{n\in\mathbb Z}
\frac{e^{in\varepsilon(x-\xi)}}{(n\varepsilon+a)^2+\lambda}.
\end{equation}
On the diagonal we obtain
$$
\aligned
G(\lambda):=G_\lambda(\xi,\xi)=\frac\varepsilon{2\pi}
\sum_{n\in\mathbb Z}
\frac1{(n\varepsilon+a)^2+\lambda}\\=
\frac1\varepsilon\frac1{2\pi}\sum_{n\in\mathbb Z}
\frac1{(n+\alpha)^2+\lambda/\varepsilon^2}\\=
\frac1{2\sqrt{\lambda}}\,\frac{\sinh(2\pi\sqrt{\lambda}/\varepsilon)}
{\cosh(2\pi\sqrt{\lambda}/\varepsilon)-\cos(2\pi\alpha)}.
\endaligned
$$
Using a general result (see Theorem~2.2 in \cite{IZ} with $\theta=1/2$)
we find that the sharp constant in~\eqref{magnetic} is as follows
$$
\mathrm{k}(\alpha)=\frac1{\theta^\theta (1-\theta)^{1-\theta}}
\sup_{\lambda>0}\lambda^\theta G(\lambda)=\sup_{\varphi>0}F(\varphi),
$$
where
$$
F(\varphi)=\frac{\sinh(\varphi)}
{\cosh(\varphi)-\cos(2\pi \alpha)}\,,\quad\varphi=2\pi\sqrt{\lambda}/\varepsilon.
$$
An elementary analysis of the dependence of the behaviour of the function $F(\varphi)$
on the parameter $\alpha=a/\varepsilon$ (see \cite{IMRN} for the details) gives
the expression for $\mathrm k(\alpha)$ in \eqref{K1}.

We now consider the case of a non-constant magnetic potential
$a(x)$. It this case instead of the complex exponentials we
consider the orthonormal system of functions
\begin{equation}\label{nonconst}
\sqrt{\frac\varepsilon{2\pi}}\varphi_n(x),\quad \varphi_n(x)=
e^{i((n+\alpha)\varepsilon x-\int_0^xa(y)dy)}
\end{equation}
that are periodic with period $2\pi/\varepsilon$ in view of~\eqref{flux} and satisfy
$$
\left(i\frac d{dx}-a(x)\right)\varphi_n(x)=-\varepsilon(n+\alpha)\varphi_n(x).
$$
Therefore the Green's function of the operator $A(\lambda)$ is
$$
G_\lambda(x,\xi)=\frac\varepsilon{2\pi}\sum_{n\in\mathbb Z}
\frac{e^{i(n+\alpha)\varepsilon(x-\xi)-\int_\xi^xa(y)dy}}{\varepsilon^2(n+\alpha)^2+\lambda},
$$
giving the same expression for $G_\lambda(\xi,\xi)$ as in \eqref{Green}
and hence the same expression for  $\mathrm k(\alpha)$
as in the case $a(x)=\mathrm{const}$.

We can now obtain inequality~\eqref{2} with
$\mathrm{K}(\alpha)\le\mathrm k(\alpha)^2$ by the method
of~\cite{E-F}. For an arbitrary $\xi=(\xi_1,\dots,\xi_N)\in\mathbb{C}^N$
we set $u(x)=\sum_{n=1}^N\xi_n\psi_n(x)$ in \eqref{magnetic}. Using
orthonormality we obtain
$$
\biggl|\sum_{n=1}^N\xi_n\psi_n(x)\biggr|^4\le\mathrm k(\alpha)^2|\xi|^2\,
\sum_{n,k=1}^N\xi_n\bar\xi_{k}\bigl(i\psi'_n-a\psi_n,i\psi'_{k}-a\psi_{k}\bigr)
$$
For a fixed $x$ we set $\xi_j:=\bar\psi_j(x)$, $j=1,\dots,N$,  which gives
$$
\rho(x)^3\le\mathrm k(\alpha)^2\sum_{n,k=1}^N\psi(x)_n\bar\psi_{k}(x)\bigl(i\psi'_n-a\psi_n,i\psi'_{k}-a\psi_{k}\bigr).
$$
Integrating in $x$ and again using orthonormality we obtain~\eqref{2} with~\eqref{K1}.
\medskip

It now remains to prove \eqref{K2}: $\mathrm{K}(\alpha)\le\mathrm K_2(\alpha)$.
Let  $f$ be a non-negative function on $\mathbb{R}^+$ with $\int_0^\infty f(t)^2dt=1$ so that
$$
\int_0^\infty f(t/E)^2dt=E.
$$
Let $a(x)\ne\mathrm{const}$. We use the Fourier series with respect to
system~\eqref{nonconst}:
$$
\psi(x)=\sqrt{\frac\varepsilon{2\pi}}\sum_{k\in\mathbb{Z}}\widehat\psi_k \varphi_k(x),\qquad
\widehat{\psi}_k=\sqrt{\frac\varepsilon{2\pi}}
\int_{0}^{2\pi/\varepsilon}\psi(x)\varphi_k(-x)dx.
$$
Then we obtain that
$$
\aligned
\int_{0}^{L} |i \psi'(x) - a(x)\,  \psi(x)|^2 \, dx=
\sum_{k\in\mathbb{Z}}\varepsilon^2|k+\alpha|^2|\widehat\psi(k)|^2=\\
\int_0^\infty \sum_{k\in\mathbb{Z}} f\biggl(\frac E{\varepsilon^2|k+\alpha|^2}\biggr)^2|\widehat\psi_k|^2dE=
\int_{0}^{L}\int_0^\infty|\psi^E(x)|^2dEdx,
\endaligned
$$
where
$$
\psi^E(x)=\sqrt{\frac\varepsilon{2\pi}}\sum_{k\in\mathbb{Z}}
 f\biggl(\frac E{\varepsilon^2|k+\alpha|^2}\biggr)\widehat\psi_k\varphi_k(x).
$$
Therefore
\begin{equation}\label{hi}
\aligned
&\psi(x)-\psi^E(x)\\=&
\sqrt{\frac\varepsilon{2\pi}}\sum_{k\in\mathbb{Z}}
\left(1-f\biggl(\frac E{\varepsilon^2|k+\alpha|^2}\biggr)\right)\widehat\psi_k\varphi_k(x)=
(\psi(\cdot), \chi^E(\cdot,x)),
\endaligned
\end{equation}
where
\begin{equation}
\label{chi}
\chi^E(x',x)=\sqrt{\frac\varepsilon{2\pi}}
\sum_{k\in\mathbb{Z}}
\left(1-f\biggl(\frac E{\varepsilon^2|k+\alpha|^2}\biggr)\right)\varphi_k(x')\varphi_k(-x).
\end{equation}
For any $\delta>0$ we have
\begin{equation}\label{eps}
\aligned
\rho(x)\le
(1+\delta) \sum_{n=1}^N |\psi_n^{E}(x)|^2
+ (1+\delta^{-1}) \sum_{n=1}^N |\psi_n(x) - \psi_n^{E}(x)|^2.
\endaligned
\end{equation}
In view of orthonormality, Bessel's inequality, \eqref{hi}
and the fact that $|\varphi_k(x)|\equiv1$ we have
\begin{equation}\label{sumchi}
\aligned
\sum_{n=1}^N |\psi_n(x) - \psi_n^{E}(x)|^2=
\sum_{n=1}^N |(\psi_n(\cdot),\chi^E(\cdot,x))|^2\le\\\le
\|\chi^E(\cdot,x))\|^2_{L^2(0,L)}=
\frac\varepsilon{2\pi}\sum_{k\in\mathbb{Z}}
\left(1-f\biggl(\frac E{\varepsilon^2|k+\alpha|^2}\biggr)\right)^2.
\endaligned
\end{equation}
Next, following \cite{Frank-Nam, lthbook} (see Remark~\ref{R:optch}) we set
\begin{equation}\label{choff}
f(t)=\frac1{1+\mu  t^{3/2}},\qquad\mu=\left(\frac{4\pi}{9\sqrt{3}}\right)^{3/2}\,.
\end{equation}
This gives
\begin{equation}\label{A(alpha)}
\aligned
\|\chi^E(\cdot,x))\|^2=\frac1{2\pi}\varepsilon\mu^2E^3\sum_{k\in\mathbb{Z}}
\frac1{(\varepsilon^3|k+\alpha|^3+\mu E^{3/2})^2}=\\=
\frac1{2\pi}\varepsilon^{-5}\mu^2\sqrt{E}E^{5/2}\sum_{k\in\mathbb{Z}}
\frac1{(|k+\alpha|^3+\mu (\sqrt{E}/\varepsilon)^3)^2}=\\=
\frac1{2\pi}\mu^{1/3}\sqrt{E}\cdot b^{5/3}\sum_{k\in\mathbb{Z}}
\frac1{(|k+\alpha|^3+b)^2}\le\\\le
\frac1{2\pi}\mu^{1/3}\sqrt{E}\cdot\sup_{b\ge0}b^{5/3}\sum_{k\in\mathbb{Z}}
\frac1{(|k+\alpha|^3+b)^2}=\\=
\frac1{3^{5/4}\pi^{1/2}}\sqrt{E}\cdot\sup_{b\ge 0}b^{5/3}\sum_{k\in\mathbb{Z}}
\frac1{(|k+\alpha|^3+b)^2}=: A(\alpha)\sqrt{E},
\endaligned
\end{equation}
where
$$
 A(\alpha)=\frac1{3^{5/4}\pi^{1/2}}\cdot
 \sup_{b\ge 0}b^{5/3}\sum_{k\in\mathbb{Z}} \frac1{(|k+\alpha|^3+b)^2},
$$
and where we  singled out the factor $\sqrt{E}$,  set $b:=\mu E^{3/2}/\varepsilon^3$,
and recalled the definition of $\mu$.

Substituting this into \eqref{eps} and optimizing with respect to
$\delta$ we obtain
$$
\rho(x)\le\left(\sqrt{\sum_{n=1}^N|\psi^E_n(x)|^2}+\sqrt{A(\alpha)}E^{1/4}\right)^2,
$$
which gives that
$$
\sum_{j=1}^N|\psi^E_j(x)|^2\ge\left(\sqrt{\rho(x)}-\sqrt{A(\alpha)}E^{1/4}\right)^2_+.
$$
Finally,
$$
\aligned
\int_{0}^{L} |i \psi'(x) - a(x)\psi(x)|^2 \, dx=
\int_{0}^{L}\int_0^\infty|\psi^E(x)|^2dEdx\ge\\\ge
\int_{0}^{L}\int_0^\infty\left(\sqrt{\rho(x)}-\sqrt{A(\alpha)}E^{1/4}\right)^2_+dEdx=
\frac1{15A(\alpha)^2}\int_0^{L}\rho(x)^3dx.
\endaligned
$$
The proof is complete.
\end{proof}

\begin{remark}\label{R:optch}
{\rm
The series over $k\in\mathbb{Z}$ in~\eqref{sumchi}, which we obviously want
to minimize under the condition $\int_0^\infty f(t)^2dt=1$,  corresponds
(after the change of variable $t\to\ t^{-1/2}$) to the
integral
$$
\int_0^\infty(1-f(t))^2t^{-3/2}dt.
$$
A more general problem
$$
\int_0^\infty(1-f(t))^2t^{-\beta}dt\to\inf
$$
subject to the same condition $\int_0^\infty f(t)^2dt=1$
 was solved in
\cite{Frank-Nam}:
\begin{equation}\label{choice-of-f}
f(t)=\frac1{1+\mu t^\beta}, \qquad \mu=\left(\frac{\beta-1}\beta\cdot
\frac{\pi/\beta}{\sin(\pi/\beta)}\right)^\beta,\ \ \beta>1.
\end{equation}
This explains the choice of $f(t)$ in \eqref{choff}.
}
\end{remark}

\setcounter{equation}{0}
\section{1D periodic case for matrices}\label{Sec:3}

Let $\{\bm{\psi}_n\}_{n=1}^N$ be an orthonormal family of
vector-functions
$$
\bm{\psi}_n(x)=(\psi_n(x,1),\dots,\psi_n(x,M))^T, \quad \bm\psi_n:[0,L]_{\mathrm{per}}\to\mathbb{C}^M
$$
and
$$
\aligned
&(\bm\psi_n,\bm\psi_m):=
(\bm\psi_n,\bm\psi_m)_{L^2([0,L],\mathbb{C}^M)}\\=&
\sum_{j=1}^M\int_0^L\psi_n(x,j)\overline{\psi_m(x,j)}dx
=\int_0^L \bm\psi_n(x)^T\overline{\bm\psi_m(x)}dx=\delta_{nm}.
\endaligned
$$

We consider the $M\times M$ matrix $U(x)$
$$
U(x)=\sum_{n=1}^N\bm\psi_n(x)\overline{\bm\psi_n(x)}^T.
$$

\begin{theorem}\label{Th:matrix_orth}
The following inequality holds
\begin{equation}\label{orth_mag}
\int_0^{L}\Tr[U(x)^{3}]dx\le\mathrm  K(\alpha)\sum_{n=1}^N
\int_0^{L}|i \bm\psi'_n(x) -a(x) \bm\psi_n(x)|^2_{\mathbb{C}^M} dx,
\end{equation}
where $\mathrm K(\alpha)$ is defined in Theorem~\ref{Th:Tdspec}.
\end{theorem}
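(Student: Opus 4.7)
The plan is to adapt the $\mathrm{K}_2$ argument from the proof of Theorem~\ref{Th:magnetic1d} to the matrix-valued setting; the bound $\mathrm{K}(\alpha)\le\mathrm{K}_1(\alpha)$ is already available from \cite{IMRN}, so I focus on proving $\mathrm{K}(\alpha)\le\mathrm{K}_2(\alpha)$. The crucial structural observation is that the magnetic operator $i\partial_x-a(x)$ acts component-wise on $\mathbb{C}^M$-valued functions, so the Fourier expansion in the basis \eqref{nonconst} and the low-energy cutoff $\bm\psi_n^E(x)$ defined with the same choice \eqref{choff} of $f$ can be applied entry-by-entry. In particular the kernel $\chi^E(y,x)$ from \eqref{chi} remains scalar, and summing the scalar identity over components yields
\[
\sum_{n=1}^N\int_0^L |i\bm\psi'_n-a\bm\psi_n|^2_{\mathbb{C}^M}\, dx = \int_0^L\int_0^\infty\sum_{n=1}^N |\bm\psi_n^E(x)|^2_{\mathbb{C}^M}\, dE\, dx.
\]

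The new step is to work with directional projections. Fix $x\in[0,L]$ and let $\{\eta^{(k)}(x)\}_{k=1}^M\subset\mathbb{C}^M$ be an orthonormal basis of eigenvectors of $U(x)$ with eigenvalues $\lambda_k(x)\ge 0$, so that $\Tr[U(x)^3]=\sum_k\lambda_k(x)^3$. For any unit vector $\eta\in\mathbb{C}^M$, writing $\eta^*\bm\psi_n(x)=\eta^*\bm\psi_n^E(x)+\eta^*(\bm\psi_n-\bm\psi_n^E)(x)$ and applying the triangle inequality in $\ell^2_n$ gives
\[
\sqrt{\eta^* U(x)\eta}\le \sqrt{\eta^* U^E(x)\eta}+\sqrt{\eta^* V^E(x)\eta},
\]
where $U^E(x):=\sum_n\bm\psi_n^E(x)(\bm\psi_n^E(x))^*$ and $V^E(x):=\sum_n(\bm\psi_n-\bm\psi_n^E)(x)((\bm\psi_n-\bm\psi_n^E)(x))^*$. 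Using the component-wise analogue of \eqref{hi} together with Bessel's inequality for the orthonormal family $\{\bm\psi_n\}$ in $L^2([0,L],\mathbb{C}^M)$ applied to the separable test function $\bm g_{\eta,x}(y):=\overline{\chi^E(y,x)}\,\eta$, I obtain
\[
\eta^* V^E(x)\eta=\sum_n|\eta^*(\bm\psi_n-\bm\psi_n^E)(x)|^2\le \|\bm g_{\eta,x}\|^2_{L^2(\mathbb{C}^M)}=|\eta|^2\int_0^L|\chi^E(y,x)|^2\, dy\le A(\alpha)\sqrt{E},
\]
by the scalar bound \eqref{A(alpha)}.

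Taking $\eta=\eta^{(k)}(x)$ in the triangle inequality produces $(\eta^{(k)})^* U^E(x)\eta^{(k)}\ge(\sqrt{\lambda_k(x)}-\sqrt{A(\alpha)}\,E^{1/4})_+^2$. Summing over $k$ and using $\sum_k\eta^{(k)}(x)\eta^{(k)}(x)^*=I_M$, which forces $\sum_k (\eta^{(k)})^* U^E(x)\eta^{(k)}=\Tr U^E(x)=\sum_n|\bm\psi_n^E(x)|^2_{\mathbb{C}^M}$, gives
\[
\sum_{n=1}^N|\bm\psi_n^E(x)|^2_{\mathbb{C}^M}\ge\sum_{k=1}^M\bigl(\sqrt{\lambda_k(x)}-\sqrt{A(\alpha)}\,E^{1/4}\bigr)_+^2.
\]
Integrating over $x\in[0,L]$ and $E\in(0,\infty)$ and invoking the scalar identity $\int_0^\infty(\sqrt{\lambda}-\sqrt{A(\alpha)}\,E^{1/4})_+^2\, dE=\lambda^3/(15\,A(\alpha)^2)$ from the proof of Theorem~\ref{Th:magnetic1d} yields \eqref{orth_mag} with $\mathrm{K}(\alpha)\le 15\,A(\alpha)^2=\mathrm{K}_2(\alpha)$.

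The main obstacle is to avoid a factor of the dimension $M$ in the bound on $V^E(x)$: a naive entry-by-entry application of Bessel's inequality would only produce $\Tr V^E(x)\le M\int|\chi^E(y,x)|^2\, dy$, which is useless for large $M$. The fix is to apply Bessel's inequality in $L^2([0,L],\mathbb{C}^M)$ to the \emph{single} direction-dependent test function $\bm g_{\eta,x}$ (scalar kernel times one fixed direction $\eta$), obtaining a uniform bound on the quadratic form $\eta^*V^E(x)\eta$, and only afterwards to introduce the $M$-fold sum through the orthonormal eigenvectors of $U(x)$, which is absorbed into $\Tr U^E(x)$ on the left-hand side.
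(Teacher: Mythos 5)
Your proof is correct and takes essentially the same route as the paper: the bound $\mathrm K(\alpha)\le\mathrm K_1(\alpha)$ is quoted from \cite{IMRN} and, for $\mathrm K_2$, you use the same Fourier cutoff $\bm\psi_n^E$ built from the basis~\eqref{nonconst}, the same scalar kernel $\chi^E$, and---crucially---the same device of applying Bessel's inequality to the single vector-valued test function $\chi^E(\cdot,x)\eta$ so that no factor of $M$ appears. The only (cosmetic) difference is at the spectral-comparison step: where the paper uses the $(1+\delta)$-split, the resulting operator inequality, the variational principle comparing the eigenvalues of $U(x)$ and $U^E(x)$, and optimization in $\delta$, you apply the triangle inequality in $\ell^2_n$ along the eigendirections of $U(x)$ and then sum the quadratic forms of $U^E(x)$ over that orthonormal eigenbasis to get $\Tr U^E(x)\ge\sum_k\bigl(\sqrt{\lambda_k(x)}-\sqrt{A(\alpha)}\,E^{1/4}\bigr)_+^2$, which is an equivalent and slightly more streamlined way of reaching the same estimate.
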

\begin{proof}
We first show that  $\mathrm K(\alpha)\le\mathrm{K}_2(\alpha)$.

As before, let $f$ be a scalar function with
$\int_0^\infty (t)^2dt=1$. Then

$$
\aligned
\int_{0}^{L} |i \bm\psi'(x) - a(x)\bm\psi(x)|_{\mathbb{C}^M}^2 \, dx=
\sum_{k\in\mathbb{Z}}\varepsilon^2|k+\alpha|^2|\widehat{\bm\psi}(k)|^2_{\mathbb{C}^M}=\\
\int_0^\infty \sum_{k\in\mathbb{Z}}
f\biggl(\frac E{\varepsilon^2|k+\alpha|^2}\biggr)^2|\widehat{\bm\psi}_k|^2_{\mathbb{C}^M}dE=
\int_{0}^{L}\int_0^\infty|\bm\psi^E(x)|_{\mathbb{C}^M}^2dEdx,
\endaligned
$$
where
$$
\bm\psi^E(x)=\sqrt{\frac\varepsilon{2\pi}}
\sum_{k\in\mathbb{Z}}f\left(\frac E{\varepsilon^2|k-\alpha|}\right)
\varphi_k(x)\widehat{\bm\psi}_k,\
\widehat{\bm\psi}_k=(\widehat{\psi}_k(1),\dots,\widehat{\psi}_k(M))^T.
$$

 Let
$\mathbf{e}\in\mathbb{C}^M$ be a constant vector. Then
$$
\aligned
\langle U(x)\mathbf{e},\mathbf{e}\rangle=
\sum_{n=1}^N|\mathbf{e}^T\bm\psi_n(x)|^2=
\sum_{n=1}^N|\langle\bm\psi_n(x),\mathbf{e}\rangle|^2\\=
\sum_{n=1}^N|\langle\bm\psi_n(x)-\bm\psi_n^E(x),\mathbf{e}\rangle+\langle\bm\psi_n^E(x),\mathbf{e}\rangle|^2\\\le
(1+\delta)\sum_{n=1}^N|\langle\bm\psi_n(x)-\bm\psi_n^E(x),\mathbf{e}\rangle|^2+
(1+\delta^{-1})\sum_{n=1}^N|\langle\bm\psi_n^E(x),\mathbf{e}\rangle|^2
\endaligned
$$
where $\langle\cdot,\cdot\rangle$
denotes the scalar product in $\mathbb{C}^M$. For the first term we have
$$
\aligned
\langle\bm\psi(x)-\bm\psi^E(x),\mathbf{e}\rangle\\=
\sqrt{\frac\varepsilon{2\pi}}\sum_{k\in\mathbb{Z}}
\left(1-f\left(\frac E{\varepsilon^2|k-\alpha|^2}\right)\right)
\varphi_k(x)\langle\widehat{\bm\psi}_k,\mathbf e\rangle\\=
(\bm\psi(\cdot),\chi^E(\cdot,x)\mathbf e)_{L^2(L,\mathbb C^M)},
\endaligned
$$
where the scalar  function $\chi^E(x',x)$ is as in~\eqref{chi}. Now, again by orthonormality,
Bessel's inequality and \eqref{A(alpha)} we obtain
$$
\aligned
\sum_{n=1}^N|\langle\bm\psi_n(x)-\bm\psi_n^E(x),\mathbf{e}\rangle|^2=
\sum_{n=1}^N(\bm\psi_n(\cdot),\chi^E(\cdot,x)\mathbf e)_{L^2(L,\mathbb C^M)}\\\le
\|\chi^E(\cdot,x)\mathbf e\|^2_{L^2(L,\mathbb C^M)}=
\|\chi^E(\cdot,x)\|^2_{L^2}\|\mathbf e\|_{\mathbb{C}^M}^2\le
A(\alpha)\sqrt{E}\|\mathbf e\|_{\mathbb{C}^M}^2.
\endaligned
$$
For the second term we simply write
$$
\sum_{n=1}^N|\langle\bm\psi_n(x),\mathbf{e}\rangle|^2=\langle U^E(x)\mathbf e,\mathbf e\rangle,
\quad
U^E(x)=\sum_{n=1}^N\bm\psi^E_n(x)\overline{\bm\psi^E_n(x)}^T.
$$
Combining the above we obtain
$$
\aligned
\langle U(x)\mathbf{e},\mathbf{e}\rangle\le
(1+\delta^{-1})\langle U^E(x)\mathbf{e},\mathbf{e}\rangle+
(1+\delta)A(\alpha)\sqrt{E}\|\mathbf e\|_{\mathbb{C}^M}^2.
\endaligned
$$
If we denote by $\lambda_j(x)$ and $\lambda_j^E(x)$, $j=1,\dots,M$  the
eigenvalues of the (Hermitian) matrices $U(x)$ and $U^E(x)$,
respectively, then the variational principle implies that
$$
\lambda_j(x)\le(1+\delta^{-1})\lambda_j^E(x)+(1+\delta)A(\alpha)\sqrt{E}.
$$
Optimizing with respect to $\delta$ we find that
$$
\lambda_j(x)\le\left(\sqrt{\lambda_j^E(x)}+A(\alpha)^{1/2}E^{1/4}\right)^2,
$$
or
$$
\lambda_j^E(x)\ge\left(\sqrt{\lambda_j(x)}-A(\alpha)^{1/2}E^{1/4}\right)^2_+,
\quad j=1,\dots, M.
$$
Therefore
$$
\sum_{n=1}^N|\bm\psi_n^E(x)|^2_{\mathbb{C}^M}=\Tr_{\mathbb{C}^M}U^E(x)\ge
\sum_{j=1}^M\left(\sqrt{\lambda_j(x)}-A(\alpha)^{1/2}E^{1/4}\right)^2_+.
$$
Integration with respect to $E$ gives that
$$
\aligned
&\sum_{n=1}^N\int_0^\infty|\bm\psi_n^E(x)|^2_{\mathbb{C}^M}dE\ge
\sum_{j=1}^M\int_0^\infty
\left(\sqrt{\lambda_j(x)}-A(\alpha)^{1/2}E^{1/4}\right)^2_+dE\\=&
\frac1{15A(\alpha)^2}\sum_{j=1}^M\lambda_j(x)^3=\frac1{15A(\alpha)^2}
\Tr U(x)^3,
\endaligned
$$
and integration with respect to $x$ gives~\eqref{orth_mag}
with~\eqref{K2}.

We finally point out that matrix inequality~\eqref{orth_mag}
with estimate of the constant~\eqref{K1} was previously proved in
\cite[Theorem~6.2]{IMRN}. The proof given there holds formally for the case
of a constant magnetic potential. However, if $a(x)\ne\mathrm{const}$ we only have to use
the orthonormal family~\eqref{nonconst} as we have done
in the proof of the scalar  Lieb--Thirring inequality in
Theorem~\ref{Th:magnetic1d}. The proof is complete.
\end{proof}

It is well known \cite{D-L-L, lthbook} that inequalities for orthonormal systems
are equivalent to the estimates for the negative trace
of the corresponding  Schr\"odinger operator.
In our case we consider the magnetic Schr\"odinger operator
\begin{equation}\label{H}
H=\left(i\frac d{dx}-a(x)\right)^2-V
\end{equation}
in $L_2([0,L]_{\mathrm{per}})$ with matrix-valued potential $V$.

\begin{theorem}\label{Th:matrix_neg}
Let $V(x)\ge0$ be an $M\times M$ Hermitian matrix such that $\Tr V^{3/2}\in L^1(0,L)$.
Then the spectrum of operator \eqref{H} is discrete and
the negative eigenvalues $-\lambda_n\le0$ satisfy the estimate
\begin{equation}\label{neg_mag}
\aligned
\sum_n\lambda_n\le\frac2{3\sqrt{3}}\sqrt{\mathrm K(\alpha)}\int_0^L\Tr[V(x)^{3/2}]dx\\=
\frac\pi{\sqrt{3}}\sqrt{\mathrm K(\alpha)}L_{1,1}^{\mathrm{cl}}\int_0^L\Tr[V(x)^{3/2}]dx,
\endaligned
\end{equation}
where
\begin{equation}\label{class}
L_{\gamma,d}^{\mathrm{cl}} = \frac{1}{(2\pi)^d}\, \int_{\mathbb R^d}  (1-|\xi|^2)_+^\gamma\, d\xi=
\frac{\Gamma(\gamma+1)}{2^d\pi^{d/2}\Gamma(\gamma+d/2+1)}.
\end{equation}
\end{theorem}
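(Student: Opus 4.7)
The plan is to run the standard duality argument that promotes the orthonormal-system inequality of Theorem~\ref{Th:matrix_orth} into a bound on the negative spectrum of $H$, adapted to the matrix-valued potential. The three ingredients beyond Theorem~\ref{Th:matrix_orth} are (i)~the variational identity $-\lambda_n=(H\bm\psi_n,\bm\psi_n)$ summed over the negative eigenspace, (ii)~the trace Hölder inequality combined with Hölder in the variable~$x$, and (iii)~a one-parameter optimization.

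First I would record discreteness of the spectrum. The free magnetic Laplacian $(id/dx-a)^2$ on $L_2([0,L]_\mathrm{per},\mathbb{C}^M)$ has compact resolvent with eigenvalues $\varepsilon^2(k+\alpha)^2\to\infty$, and the embedding $H^1([0,L])\hookrightarrow L^\infty$ is compact, so multiplication by $V$ with $\Tr V^{3/2}\in L^1$ is a form-compact perturbation; thus $H$ is self-adjoint with discrete spectrum, and its negative eigenvalues can be listed as $-\lambda_1\le\dots\le -\lambda_N$ (possibly $N=\infty$). Let $\{\bm\psi_n\}_{n=1}^N$ be corresponding orthonormal eigenfunctions, and set
\[
U(x)=\sum_{n=1}^N\bm\psi_n(x)\overline{\bm\psi_n(x)}^T,\qquad T=\sum_{n=1}^N\int_0^L|i\bm\psi_n'-a\bm\psi_n|^2_{\mathbb{C}^M}\,dx.
\]
Testing the eigenvalue equation against $\bm\psi_n$ and summing,
\[
\sum_{n=1}^N\lambda_n=\int_0^L\Tr[V(x)U(x)]\,dx\;-\;T.
\]
Applying the trace Hölder inequality with conjugate exponents $3/2$ and $3$ pointwise in $x$, an outer Hölder in $x$, and then Theorem~\ref{Th:matrix_orth}, I obtain
\[
\int_0^L\Tr[VU]\,dx\le\Bigl(\int_0^L\Tr V^{3/2}\,dx\Bigr)^{2/3}\bigl(\mathrm K(\alpha)\,T\bigr)^{1/3}.
\]

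Writing $W:=\int_0^L\Tr V^{3/2}\,dx$, the previous two displays combine into $\sum_n\lambda_n\le \mathrm K(\alpha)^{1/3}W^{2/3}T^{1/3}-T$. Maximizing the right-hand side over $T\ge 0$ yields the maximizer $T_*=\mathrm K(\alpha)^{1/2}W/(3\sqrt{3})$ and maximum value $(2/3\sqrt 3)\sqrt{\mathrm K(\alpha)}\,W$, which is exactly \eqref{neg_mag}. The second form of the constant is a quick check: by \eqref{class}, $L_{1,1}^{\mathrm{cl}}=\frac{1}{2\pi}\cdot\frac{4}{3}=\frac{2}{3\pi}$, so $\frac{\pi}{\sqrt 3}\,L_{1,1}^{\mathrm{cl}}=\frac{2}{3\sqrt 3}$.

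The main step that needs justification, rather than an obstacle per se, is the Schatten-Hölder bound $\Tr[VU]\le(\Tr V^{3/2})^{2/3}(\Tr U^3)^{1/3}$ for Hermitian positive matrices, which is classical. A minor bookkeeping item is the case $N=\infty$: one applies the argument to the truncation $\{\bm\psi_n\}_{n=1}^{N_0}$ and passes to the limit $N_0\to\infty$, which is legitimate because the resulting bound is uniform in $N_0$.
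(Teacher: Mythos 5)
Your proposal is correct and follows essentially the same route as the paper: test the eigenvalue equation against the eigenfunctions, apply the trace H\"older inequality together with the orthonormal-system bound of Theorem~\ref{Th:matrix_orth}, and optimize a one-parameter expression to get the constant $\frac{2}{3\sqrt 3}\sqrt{\mathrm K(\alpha)}$. The only (immaterial) difference is that you optimize over the kinetic energy $T$ while the paper optimizes over $X=\int_0^L\Tr[U(x)^3]\,dx$, which yields the same value; your added remarks on discreteness of the spectrum and on the truncation for $N=\infty$ are fine and merely make explicit what the paper leaves implicit.
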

\begin{proof}
Let $\{\bm\psi_n\}_{n=1}^N$ be the orthonormal vector valued eigenfunctions
corresponding to $\{-\lambda_n\}_{n=1}^N$:
$$
\left(i\frac d{dx}-a(x)\right)^2\bm\psi_n-V\bm\psi_n=-\lambda_n\bm\psi_n.
$$
Taking the scalar product with $\bm\psi_n$, using inequality  \eqref{orth_mag}, H\"older's inequality for traces and setting
$X=\int_0^L\Tr[U(x)^3]dx$, we obtain
$$
\aligned
&\sum_{n=1}^N\lambda_n=
-\sum_{n=1}^N   \int_0^{L} |(i \bm\psi'(x) -a(x)\bm\psi_n(x)|^2_{\mathbb{C}^M}dx
+\int_0^{L}\Tr [V(x)U(x)]dx\\
&\le-\mathrm K(\alpha)^{-1}X+
\left(\int_0^L\Tr[V(x)^{3/2}]dx\right)^{2/3}
X^{1/3}.
\endaligned
$$
Calculating the maximum with respect to $X$ we
obtain~\eqref{neg_mag}.
\end{proof}

 The higher-order Riesz means
of the eigenvalues for magnetic Schr\"odinger operators with matrix-valued potentials
are obtained by the Aizenmann--Lieb argument \cite{AizLieb,lthbook}.
\begin{corollary}\label{1D gamma-moments}
Let $V\ge0$ be a  $M\times M$ Hermitian matrix, such that
$\Tr V^{\gamma + 1/2} \in L_1(0,L)$.
Then for any $\gamma \ge 1$  the negative eigenvalues of
the operator \eqref{H} satisfy the inequalities
\begin{equation}\label{1Dgamma}
\sum \lambda_n^\gamma \le L_{\gamma,1} \int_0^{L} \Tr[V(x)^{1/2 + \gamma}] \, dx,
\end{equation}
where
\begin{equation*}
L_{\gamma,1} \le \frac 2{3\sqrt{3}}\sqrt{\mathrm K(\alpha)}\, \frac{L_{\gamma,1}^{\mathrm{cl}}}{L_{1,1}^{\mathrm{cl}}}=
\frac\pi{\sqrt{3}}\sqrt{\mathrm K(\alpha)}\,L_{\gamma,1}^{\mathrm{cl}}.
\end{equation*}
\end{corollary}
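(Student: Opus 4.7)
The statement for $\gamma=1$ is exactly Theorem~\ref{Th:matrix_neg}, so I only need to treat $\gamma>1$. The plan is the standard Aizenman--Lieb lifting argument, adapted to the matrix-valued setting. The starting point is the elementary identity
$$
x_+^{\gamma}=\gamma(\gamma-1)\int_0^\infty \tau^{\gamma-2}(x-\tau)_+\,d\tau,\qquad \gamma>1,
$$
proved by a one-line computation ($\int_0^x\tau^{\gamma-2}(x-\tau)d\tau = x^\gamma/(\gamma(\gamma-1))$).

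Apply this with $x=\lambda_n$ and sum over $n$:
$$
\sum_n\lambda_n^\gamma = \gamma(\gamma-1)\int_0^\infty \tau^{\gamma-2}\sum_n(\lambda_n-\tau)_+\,d\tau.
$$
Next I would observe that $\sum_n(\lambda_n-\tau)_+$ is precisely the negative trace of the shifted operator $H+\tau I=(i d/dx-a(x))^2-(V(x)-\tau I)$, and by the variational principle this is bounded above by the negative trace of $(i d/dx-a(x))^2-(V(x)-\tau I)_+$, where $(V-\tau I)_+$ denotes the positive Hermitian part (diagonalize $V(x)$ pointwise and replace each eigenvalue $v_j(x)$ by $(v_j(x)-\tau)_+$). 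I would then invoke Theorem~\ref{Th:matrix_neg} with the nonnegative matrix potential $(V-\tau I)_+$:
$$
\sum_n(\lambda_n-\tau)_+\le \frac{\pi}{\sqrt 3}\sqrt{\mathrm K(\alpha)}\,L_{1,1}^{\mathrm{cl}}\int_0^L\Tr\bigl[(V(x)-\tau I)_+^{3/2}\bigr]\,dx.
$$

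The concluding step is to swap the order of integration and evaluate the $\tau$-integral. Diagonalizing $V(x)$ pointwise, the scalar inner integral reduces to a Beta function:
$$
\int_0^\infty \tau^{\gamma-2}(v_j(x)-\tau)_+^{3/2}\,d\tau = B(\gamma-1,5/2)\,v_j(x)^{\gamma+1/2},
$$
so $\int_0^\infty \tau^{\gamma-2}\Tr[(V-\tau I)_+^{3/2}]\,d\tau = B(\gamma-1,5/2)\,\Tr[V^{\gamma+1/2}]$. Combining everything yields inequality~\eqref{1Dgamma} with constant
$$
L_{\gamma,1}\le \frac{\pi}{\sqrt 3}\sqrt{\mathrm K(\alpha)}\,L_{1,1}^{\mathrm{cl}}\cdot \gamma(\gamma-1)B(\gamma-1,5/2).
$$

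The only bookkeeping task is to identify $\gamma(\gamma-1)B(\gamma-1,5/2)\,L_{1,1}^{\mathrm{cl}}$ with $L_{\gamma,1}^{\mathrm{cl}}$. Using $L_{\gamma,1}^{\mathrm{cl}}=\Gamma(\gamma+1)/(2\sqrt{\pi}\,\Gamma(\gamma+3/2))$ from~\eqref{class}, a direct Gamma-function manipulation gives
$$
\gamma(\gamma-1)B(\gamma-1,5/2)=\frac{\Gamma(\gamma+1)\Gamma(5/2)}{\Gamma(\gamma+3/2)}=\frac{L_{\gamma,1}^{\mathrm{cl}}}{L_{1,1}^{\mathrm{cl}}},
$$
which produces exactly the stated constant. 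This last algebraic identification is the only mildly delicate point; the rest is a routine application of the Aizenman--Lieb machinery, and no obstruction arises from the matrix-valued nature of $V$ thanks to the pointwise spectral decomposition.
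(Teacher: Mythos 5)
Your proposal is correct and is exactly the Aizenman--Lieb lifting argument that the paper invokes (by citation) for this corollary: writing $\lambda_+^{\gamma}$ as $\gamma(\gamma-1)\int_0^\infty\tau^{\gamma-2}(\lambda-\tau)_+\,d\tau$, applying Theorem~\ref{Th:matrix_neg} to the shifted potential $(V-\tau I)_+$, and evaluating the resulting Beta integral, with the Gamma-function identity correctly reproducing $L_{\gamma,1}^{\mathrm{cl}}/L_{1,1}^{\mathrm{cl}}$. The only (harmless) omission is the routine check that $\Tr[(V-\tau I)_+^{3/2}]\in L^1(0,L)$ follows from $\Tr V^{\gamma+1/2}\in L^1(0,L)$ for $\gamma\ge1$, so Theorem~\ref{Th:matrix_neg} is indeed applicable for each $\tau>0$.
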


\setcounter{equation}{0}
\section{Magnetic Schr\"odinger operator on the torus}\label{Sec:4}

\begin{proof}[Proof of Theorem~\ref{Th:Tdspec}]
We use the lifting argument with respect to dimensions developed in~\cite{Lap-Weid}.
More precisely, we apply estimate \eqref{1Dgamma} $d-1$ times with respect to
variables $x_1,\dots,x_{d-1}$ (in the matrix case), so that $\gamma$ is increased by $1/2$ at
each step, and, finally, we use~\eqref{1Dgamma} (in the scalar case) with respect to $x_d$.
Using the variational principle and
denoting the negative parts of the operators by $[\,\cdot\,]_-$ we obtain
\begin{align*}
&\sum_n\lambda_n^\gamma(\mathcal{H})
=\sum_n\lambda_n^\gamma\biggl((i\partial_{x_1}-a_1(x_1))^2+
\sum_{j=2}^{d-1}(i\partial_{x_j}-a_j(x_j))^2
-V(x)\biggr)
\\
&\ \
\le\sum_n\lambda_n^\gamma\biggl((i\partial_{x_1}-a_1(x_1))^2
-\biggl[\sum_{j=2}^{d-1}(i\partial_{x_j}-a_j(x_j))^2-
 V(x)\biggr]_-\biggr)
\\
&\ \
\le\frac\pi{\sqrt{3}}\sqrt{\mathrm K_1(\alpha_1)}{L}^\mathrm{cl}_{\gamma,1}
\int_0^{L_1}\biggl[(i\partial_{x_j}-a_j(x_j))^2
-V(x)\biggr]_-^{\gamma+1/2}\!dx_1
\\
&\ \ \le\dotsb\dotsb\dotsb\ \le
\biggl(\frac\pi{\sqrt{3}}\biggr)^{d-1}\ \prod_{j=1}^{d-1}\sqrt{\mathrm K(\alpha_j)}
\prod_{j=1}^{d-1}{L}^\mathrm{cl}_{\gamma+(j-1)/2,1}\times
 \\
& \ \times
\int_0^{L_1}\dotsi\int_0^{L_{d-1}}\operatorname{Tr}
\bigl[(i\partial_{x_d}-a_d(x_d))^2-V(x)\bigr]_-^{\gamma+(d-1)/2}\,
dx_1\dots dx_{d-1}
\\
&\ \
\le\biggl(\frac\pi{\sqrt{3}}\biggr)^{d}\ \prod_{j=1}^{d}\sqrt{\mathrm K(\alpha_j)}
\prod_{j=1}^{d}{L}^\mathrm{cl}_{\gamma+(j-1)/2,1}
\int_{\mathbb{T}^d}V^{\gamma+d/2}(x)\,dx,
\end{align*}
which proves \eqref{gamma-moments}, \eqref{gamma-moments-const}, since
$$
\prod_{j=1}^{d}{L}^\mathrm{cl}_{\gamma+(j-1)/2,1}={L}^\mathrm{cl}_{\gamma,d}.
$$
\end{proof}
\begin{remark}\label{R:orth2}
{\rm
The method of Theorem~\ref{Th:magnetic1d} (namely, its  second part) is difficult to apply in
the case orthonormal system on the torus $\mathbb{T}^d$  with $d>1$, because the corresponding series~\eqref{A(alpha)} is
now over the lattice $\mathbb{Z}^d$ and depends on $d$ parameters. However, the Lieb--Thirring
inequality for an orthonormal system $\{\psi_j\}_{j=1}^N\in H^1(\mathbb{T}^d)$ follows from
Theorem~\ref{Th:Tdspec}$_{\gamma=1}$ by duality. For example, for $d=2$ it holds
$$
\int_{\mathbb{T}^2}\rho(x)^2dx\le
\frac\pi6\sqrt{\mathrm K(\alpha_1)\mathrm K(\alpha_2)}
\sum_{j=1}^N\int_{\mathbb{T}^2}|i\nabla \psi_j(x)-\mathbf A(x)\psi_j(x)|^2_{\mathbb{C}^2}dx.
$$
}
\end{remark}

\setcounter{equation}{0}
\section{Some computations}\label{Sec:5}

We now present some computational results. We denote by
$F(b,\alpha)$ the key function in \eqref{A(alpha)}:
\begin{equation}\label{F}
F(b,\alpha):=b^{5/3}\sum_{k\in\mathbb{Z}}
\frac1{(|k+\alpha|^3+b)^2}.
\end{equation}
We clearly have that for all $\alpha$ (including integers)
$$
\lim_{b\to\infty}F(b,\alpha)=2\int_{0}^\infty\frac{dx}{(x^3+1)^2}=
\frac8{27}\sqrt{3}\pi=1.6122.
$$
This immediately gives  in the framework of this approach  (see~\eqref{K2}) a lower bound for the constant
${\mathrm K}_2(\alpha)$:
\begin{equation}\label{LTK}
{\mathrm K}_2(\alpha)\ge
\frac5{3\sqrt{3}\pi}\cdot\left[\frac8{27}\sqrt{3}\pi\right]^2=\frac{320\pi}{3^{13/2}}=
0.7961.
\end{equation}
 The graphs of $F(b,\alpha)$ for $\alpha=0.1, 0.25$, and $0.5$ are shown
in Fig. \ref{Fig.1}.
\begin{figure}[htb]
\centerline{\psfig{file=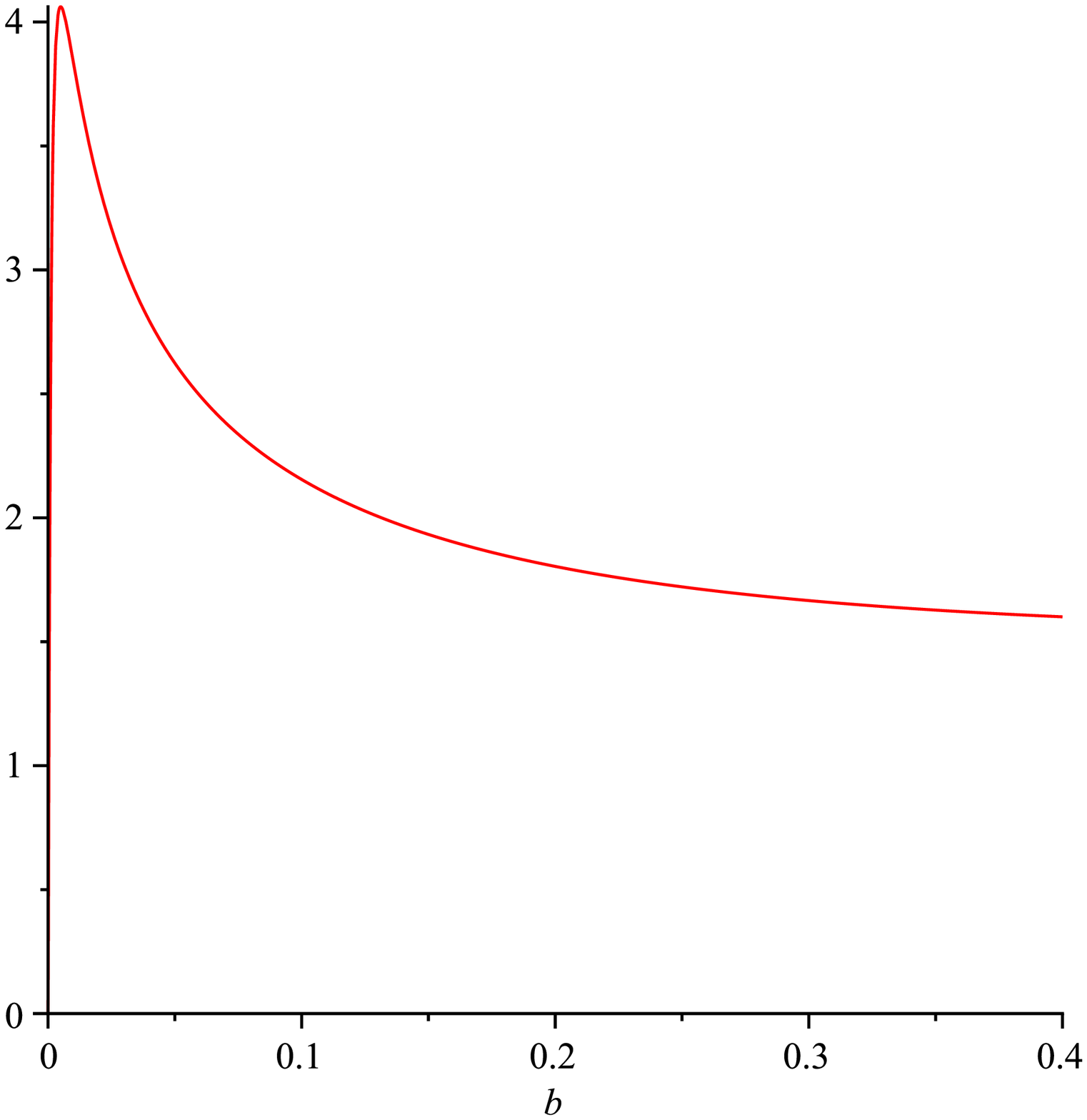,width=4.1cm,height=6cm,angle=0}
\psfig{file=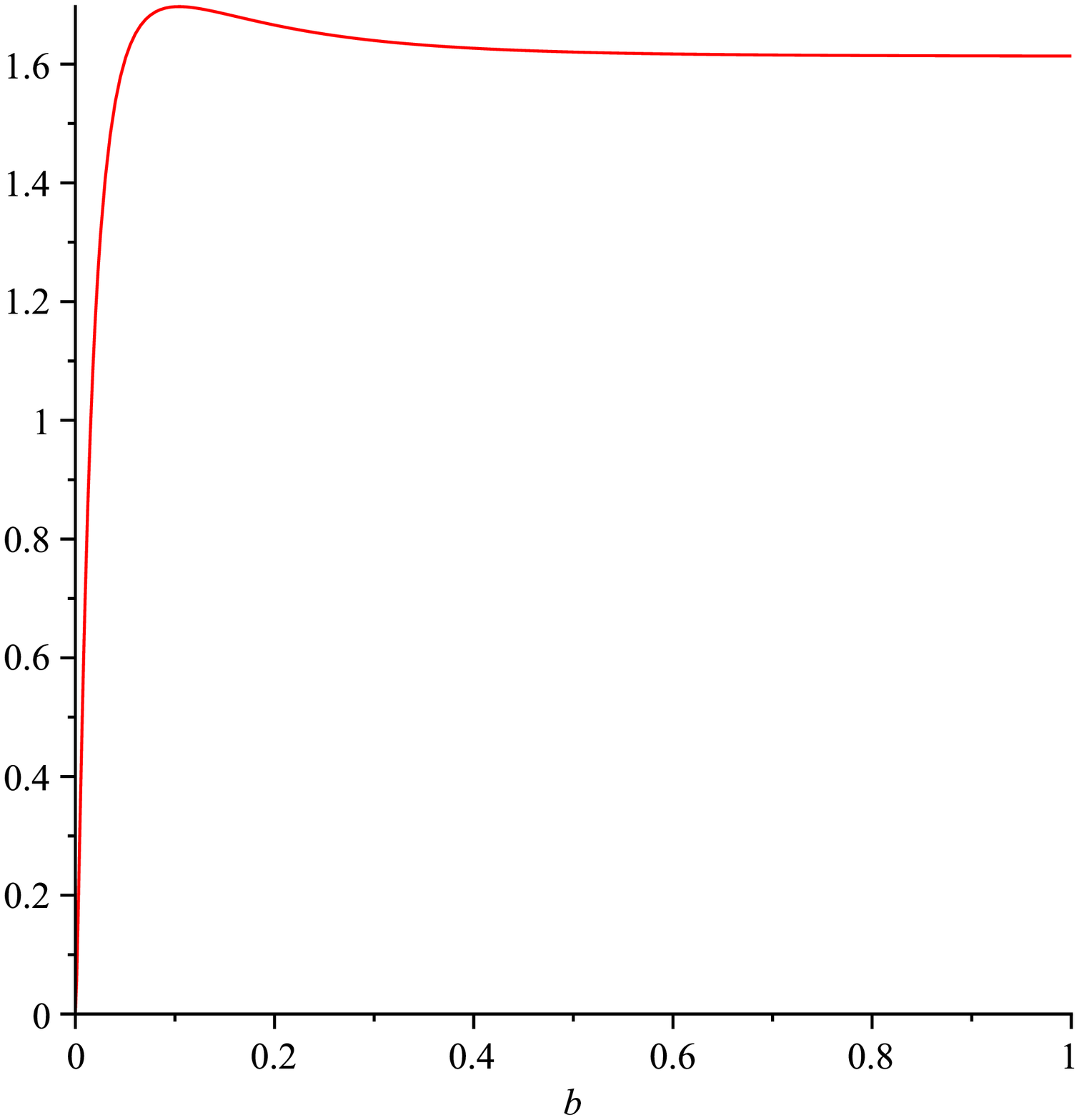,width=4.1cm,height=6cm,angle=0}
\psfig{file=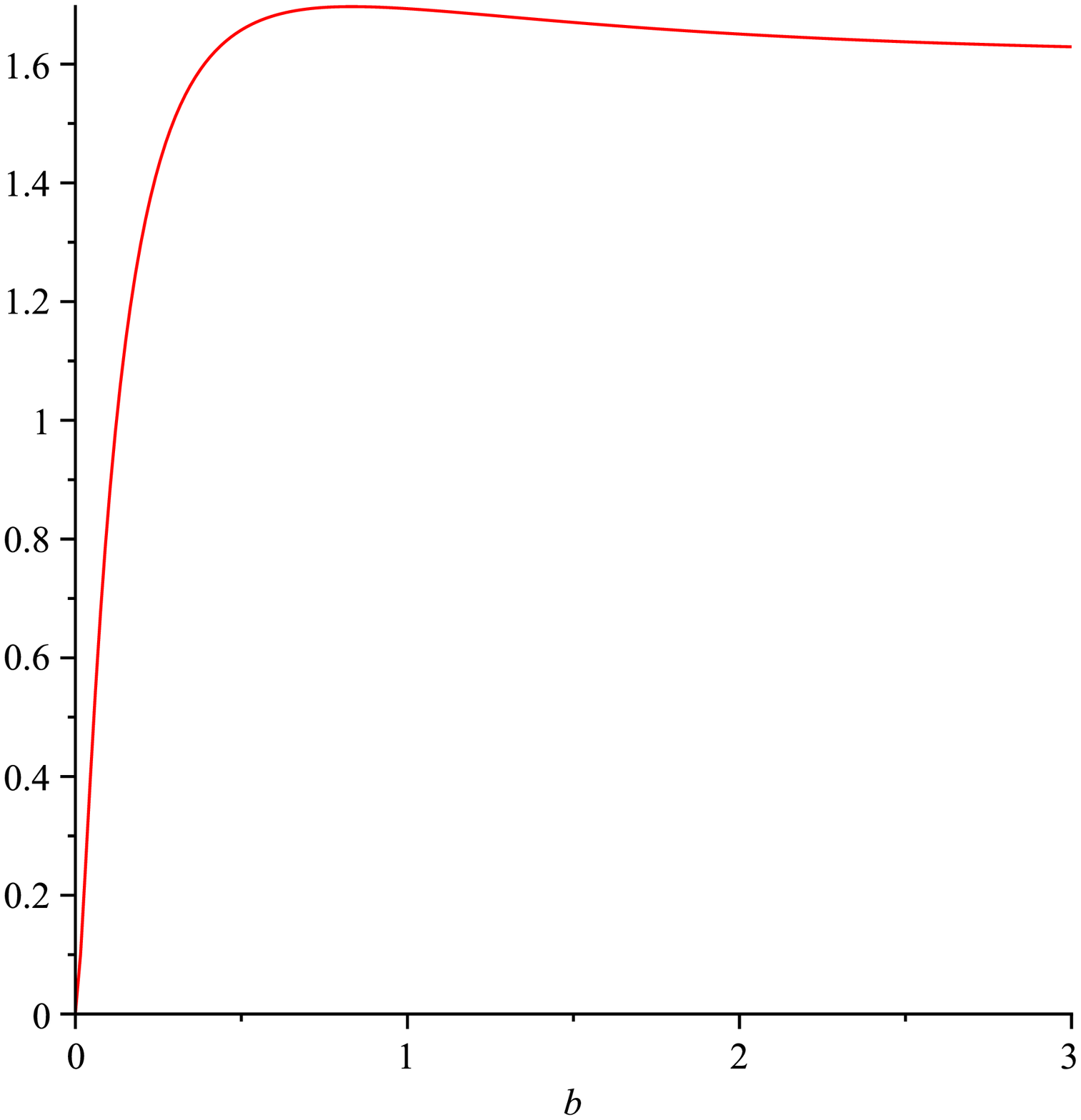,width=4.1cm,height=6cm,angle=0}}
\caption{The graphs of $F(b,0.1)$, $F(b,0.25)$, and $F(b,0.5)$.}
\label{Fig.1}
\end{figure}

The unique point of maximum $b^*(\alpha)$ has the following asymptotic
behaviour as $\alpha\to0^+$. For a small $\alpha$ the
main contribution in the sum in \eqref{F} comes from the  term with $k=0$,
that is, from
$$
b^{5/3} \frac1{(\alpha^3+b)^2},
$$
whose global maximum is attained at $b=5\alpha^3$ and equals
$\frac{5^{5/3}}{36}\cdot\frac1\alpha$.
Then \eqref{K2} gives
$$
\mathrm{K}_2(\alpha)\sim \frac{5^{13/3}}{3^{3/2}\,6^4\,\pi}\frac1{\alpha^2}=
0.0505\frac1{\alpha^2}\
\text{as}\ \alpha\to0,
$$
while it follows from  \eqref{K1} that
$$
\mathrm{K}_1(\alpha)\sim\frac1{4\pi^2}\frac1{\alpha^2}=0.025\frac1{\alpha^2}\
\text{as}\ \alpha\to0,
$$
which explains why $\mathrm K_1(\alpha)<\mathrm {K}_2(\alpha)$ near $\alpha=0$ and $\alpha=1$
in Fig. \ref{Fig.2}. On the other hand, in the middle region $|\frac12-\alpha|\le 0.2273$
the new estimate   \eqref{K2} is better.
It is also worth pointing out that
$$
\mathrm {K}_2(1/2)=\mathrm {K}_2(1/4)(=0.8819)
$$
the equality holding since
$$
F(b,1/2)=F(b/8,1/4).
$$

The minimum with respect to $\alpha$ is attained at $\alpha^*=0.273$ giving
$\mathrm {K}_2(\alpha^*)=\mathrm {K}_2(1-\alpha^*)=0.811$.
\begin{figure}[htb]
\centerline{\psfig{file=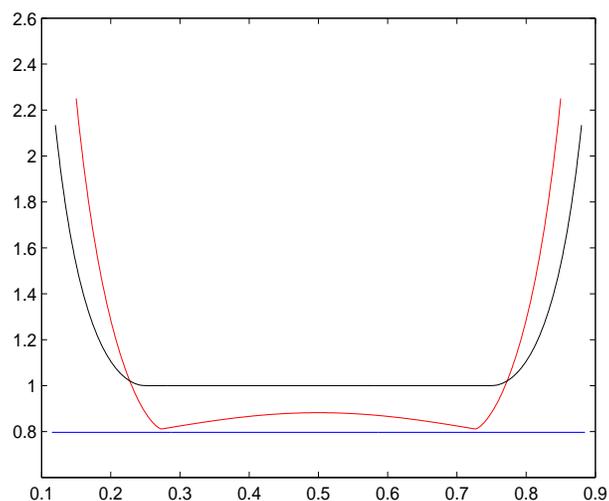,width=9.5cm,height=7.5cm,angle=0}}
\caption{The graphs of $\mathrm K_1(\alpha)=\mathrm k(\alpha)^2$ (black)
and $\mathrm{K}_2(\alpha)$ (red). The
horizontal blue line is the constant in~\eqref{LTK}.}
\label{Fig.2}
\end{figure}

\end{document}